\documentclass{siamltex}
\usepackage{amsfonts,amsmath,amssymb,color,verbatim}
\usepackage{stmaryrd}
\usepackage{hyperref}
\usepackage[mathscr]{eucal}
\usepackage{accents}
\usepackage{pgfplots}
\usepackage{ifoddpage}
\usepackage{marginnote}
\usepackage{float} 
\usepackage{placeins} 
\usetikzlibrary{patterns}
\usepackage{wasysym}
\usepackage[ruled,vlined]{algorithm2e}
\usepackage[colorinlistoftodos,textwidth=4cm,shadow]{todonotes}
\usepackage{bm}
\usepackage{tikz}
\usepackage{enumitem}
\usepackage{amsmath}
\usepackage{pgf}
\usepackage{multirow}
\usepackage{caption}
\usepackage{subcaption}
\usepackage{url}
\bibliographystyle{siam}

\makeatletter
\def\BState{\State\hskip-\ALG@thistlm}
\makeatother
\newcounter{Ivan}

\newcounter{Dasha}


\def\len{\mathop{\mathrm{len}}\nolimits}
\def\SE{\mathop{\mathrm{SE}}\nolimits}
\def\sons{\mathop{\mathrm{sons}}\nolimits}

\title{Preconditioners for hierarchical matrices based on their extended sparse form} 
\author{Daria Sushnikova\footnotemark[5] \and Ivan V.~Oseledets\footnotemark[3]\ \footnotemark[5]}
\begin{document}

\maketitle

\renewcommand{\thefootnote}{\fnsymbol{footnote}}

\footnotetext[3]{Skolkovo Institute of Science and Technology,
Novaya St.~100, Skolkovo, Odintsovsky district, 143025 
Moscow Region, Russia (i.oseledets@skolkovotech.ru)}
\footnotetext[5]{Institute of Numerical Mathematics Russian Academy of Sciences,
Gubkina St. 8, 119333 Moscow, Russia}

\renewcommand{\thefootnote}{\arabic{footnote}}

\begin{abstract} 
    In this paper we consider linear systems with dense-matrices which arise from numerical solution of boundary integral equations. Such matrices can be well-approximated with $\mathcal{H}^2$-matrices. We propose several new preconditioners for such matrices that are based on the equivalent \emph{sparse extended form} of $\mathcal{H}^2$-matrices. In the numerical experiments we show that the most efficient approach is based on the so-called reverse-Schur preconditioning technique.  
\end{abstract}

\begin{keywords}

$\mathcal{H}^2$-matrix, integral equations, preconditioning
\end{keywords}

\begin{AMS}
\end{AMS}

\pagestyle{myheadings} \thispagestyle{plain}

\section{Introduction}

Many physical problems in acoustics, electrostatics \cite{Sent-electr_ex-1992} and some other \cite{white-schur-2009} areas lead to boundary and volume integral equations with non-local kernels. 
Numerical solution of integral equations is challenging, since even the computation of all the matrix elements is offen not possible
for the problems of interest. Fortunately, the matrices, arising from the discretization of integral equations can be approximated 
well with \emph{data-sparse matrices}. Among the most important approaches are hierarchical matrices ($\mathcal{H}$-matrices) \cite{Borm-h-2003}, mosaic-skeleton method \cite{tee-mosaic-1996} , and  hierarchical semiseparable matrices (HSS-matrices ) \cite{ShDewilde-hss-2007,Mar-hss-2011, ChLi-hss-2007, ChLyons-hss-2005}.   All of those classes of matrices correspond to the idea of block low-rank approximation and have their roots in the classical fast multipole method (FMM) \cite{GrRo-fmm-1987,GrRo-fmm-1988,Ro-fmm-1985}. In this paper we consider linear systems with $\mathcal{H}^2$-matrices \cite{HackBorm-h2-2002, Borm-h2-2010}. 
An $\mathcal{H}^2$-matrix can be multiplied by a vector fast, thus iterative methods can be used to solve linear systems. But for efficient solution of a linear system it is not enough. If matrix is ill-conditioned we have to use preconditioners. A very efficient approach is based on the approximate factorization of hiearchical matrices \cite{Beb-hlu-2005}. Algorithms with almost linear complexity have been proposed and  successfully applied to many problems. The disadvantage of these methods is that the prefactor can be quite large, and the memory required to store the $L$ and $U$ factors can also be large. 
In the recent years several approaches 
have been proposed for fast direct methods with HSS matrices. HSS matrix is equivalent to the $\mathcal{H}^2$-matrix corresponding to one-dimensional geometry, thus this structure is not fully suited for solving surface and volume integral equations, i.e. it can not give optimal complexity. Nevertheless, 
the actual computing times can be amazing \cite{Ho-dir_hss-2012, CorMar-dir_hss-2014,XiaSh-hss-2009}.

In this paper we use the observation that the classical three-step matrix-by-vector product procedure can be rewritten as a big linear system (which we call sparse extended form or simply SE-form of the $\mathcal{H}^2$-matrix). Very similar ideas were presented in \cite{ChanDew-hss_se-2006, Ho-dir_hss-2012, white-schur-2009, ChMing-dir_hss-2006}. In paper \cite{Amb-dir_h2-2014} SE form is used for building direct solver for system with  $\mathcal{H}^2$-matrix.  We propose a number of new methods for preconditioning systems with H2 Matrix, based on idea of SE-form. For small problem sizes (say, $N \sim 20\,000$) this gives an easy-to-implement approach for the solution of a given linear system with an $\mathcal{H}^2$-matrix, we have found that the memory requirements and the computational cost grow very fast. Therefore we propose several alternatives, which use SE-form as an auxiliary step for the creation of efficient preconditioners for the initial matrix. We numerically confirm the effectiveness of the proposed preconditioners on two surface integral equations. The code is available online as a part of the  open-source Python package h2tools \cite{h2tools}.

\section{Notations and basic facts}
In this section we recall basic definitions and notations for working with $\mathcal{H}^2$-matrices. This material is not new and can be found, for example in \cite{Borm-h2-2010}. Let us consider a matrix $A \in \mathbb{R}^{m \times n}$. First, we will need several definitions:
\begin{definition}
Block cluster tree $\mathcal{T}_r$ ($\mathcal{T}_c$) of rows (columns) of matrix $A$ is a tree where:
\begin{enumerate}
  \item Each  node $\mathcal{T}_r^t$ ($\mathcal{T}_c^s$)  is associated with some group of rows (columns)
  \item Root node  $\mathcal{T}_r^0$ ($\mathcal{T}_c^0$)  contains all rows (columns)
  \item If some group of rows (columns) is divided into subgroups, then  the corresponding node has child nodes associated with those subgroups.
\end{enumerate} \end{definition}
\begin{definition}
Let $\mathcal{T}_c$ and  $\mathcal{T}_r$ be block cluster trees of columns and rows of the matrix $A$. 
Each pair $p=( v,w)$ where $  v \in \mathcal{T}_r ,w \in \mathcal{T}_c$ represents 
some block in matrix  $A$.  We assume that there is some rule that divides the set 
of blocks into two classes, namely \emph{close blocks} and \emph{far  blocks}. The nodes of trees $i \in \mathcal{T}_c$ and $j \in \mathcal{T}_r$ are close (far) if the pair $p = (i,j)$ is close (far). 
If some node $j$ of one tree is close to all children of some node $i$ of another tree then the 
node $j$ is close to the node $i$. Denote by $\mathcal{P}_{close}$  ($\mathcal{P}_{far}$) the 
set of all close (far) pairs $p=(i,j), i \in \mathcal{T}_r ,j \in \mathcal{T}_c$
\end{definition}
\begin{definition}[Cluster basis.]{\cite[p.~54]{Borm-h2-2010}}
Let $K = (K_i)_{i \in \mathcal{T}_I}$  be a family of finite index sets. Let $R = (R_i)_{i \in \mathcal{T}_r} $ be a family of matrices satisfying $R_i \in  \mathbb{R}^{I \times K_t}$ for all $ i \in \mathcal{T}_r$. Then $R$ is called a cluster basis with rank distribution $K$ and the matrices $R_i$ are called cluster basis matrices.
\end{definition}
The $\mathcal{H}^2$-matrix structure is related to the \emph{nestedness property}.        
\begin{definition}[Nested cluster basis.]{\cite[p.~54]{Borm-h2-2010}}
Let $R$ be a cluster basis with rank distribution $K$. $R$ is called nested if there is a family 
$(E_i)_{i \in \mathcal{T}_r}$ of matrices satisfying the following conditions:
\begin{enumerate}
  \item For all $i \in \mathcal{T}_r$ and all $i' \in \sons(i)$, we have $E_{i'} \in \mathbb{R}^{K_{i'} \times K_i}$.
  \item For all $i \in \mathcal{T}_r $ with $\sons(i)$ $\neq  \varnothing  $, the following equation holds:
\begin{equation}\label{nb}  R_i = \sum_{i' \in sons(i)}R_{i'}E_{i'},\end{equation}  \end{enumerate}The matrices $E_t$ are called transfer matrices or expansion matrices.
 \end{definition}
 We can consider matrix $A$ as a sum of two matrices: $A = C + F$, where $C$ is constructed from close blocks $p \in \mathcal{P}_{close}$ and $F$ is constructed from far blocks $p \in \mathcal{P}_{far}$. Now we can define an $\mathcal{H}^2$-matrix:
\begin{definition}[$\mathcal{H}^2-representation$]{\cite[p.~56]{Borm-h2-2010}}\label{hd}
Let $A \in \mathbb{R}^{I \times J}$.  Let $\mathcal{T}_c$ and $\mathcal{T}_r$ be block cluster trees of columns and rows of matrix A. $R$ is a nested cluster basis of rows and $L$ is a nested cluster basis of columns. If there is a family $ S = (S_p)_{p \in \mathcal{P}_{far}}$ of matrices , for all $p = (i,j) \in \mathcal{P}_{far}$ then $A$ is an $\mathcal{H}^2$-matrix and
\begin{equation}\label{defH}
A = F + C = \sum_{(i,j) \in \mathcal{P}_{far} } R_iS_pL^*_j +\sum_{(i,j) \in \mathcal{P}_{close} } C_p
\end{equation}
\end{definition}
Figure~\ref{Hpic} shows close blocks (black), far blocks (white) and an example of block row (shaded). The block row consists of columns
of the blocks that are separated from the node $i$. 
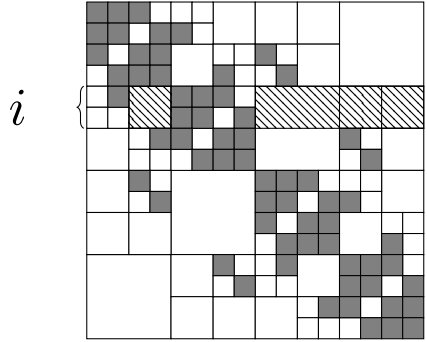
\begin{figure}[H]
\centering
\tikzset{full/.style = {fill = gray}, lowrank/.style = {}, fullrank/.style = {fill = gray}}
\begin{tikzpicture}[y = -1cm,scale = 0.28]
\foreach \i in {(12, 0), (8, 4), (4, 8), (0, 12)} \draw[fill = white] \i rectangle +(4, 4);
\foreach \i in {(6, 0), (8, 0), (10, 0), (4, 2), (10, 2), (2, 4), (12, 4), (14, 4), (0, 6), (14, 6), (0, 8), (14, 8), (0, 10), (2, 10), (12, 10), (4, 12), (10, 12), (4, 14), (6, 14), (8, 14)} \draw[fill = white] \i rectangle +(2, 2);
\foreach \i in {(3, 0), (4, 0), (5, 0), (2, 1), (5, 1), (1, 2), (6, 2), (7, 2), (9, 2), (0, 3), (7, 3), (8, 3), (0, 4), (7, 4), (0, 5), (1, 5), (6, 5), (2, 6), (5, 6), (13, 6), (2, 7), (3, 7), (4, 7), (12, 7), (3, 8), (11, 8), (12, 8), (13, 8), (2, 9), (10, 9), (13, 9), (9, 10), (14, 10), (15, 10), (8, 11), (15, 11), (7, 12), (8, 12), (15, 12), (6, 13), (8, 13), (9, 13), (14, 13), (10, 14), (13, 14), (10, 15), (11, 15), (12, 15)} \draw[fill = white] \i rectangle +(1, 1);
\foreach \i in {(0, 0), (1, 0), (2, 0), (0, 1), (1, 1), (3, 1), (4, 1), (0, 2), (2, 2), (3, 2), (8, 2), (1, 3), (2, 3), (3, 3), (6, 3), (9, 3), (1, 4), (4, 4), (5, 4), (6, 4), (4, 5), (5, 5), (7, 5), (3, 6), (4, 6), (6, 6), (7, 6), (12, 6), (5, 7), (6, 7), (7, 7), (13, 7), (2, 8), (8, 8), (9, 8), (10, 8), (3, 9), (8, 9), (9, 9), (11, 9), (12, 9), (8, 10), (10, 10), (11, 10), (9, 11), (10, 11), (11, 11), (14, 11), (6, 12), (9, 12), (12, 12), (13, 12), (14, 12), (7, 13), (12, 13), (13, 13), (15, 13), (11, 14), (12, 14), (14, 14), (15, 14), (13, 15), (14, 15), (15, 15)} \draw[full] \i rectangle +(1, 1);
\draw[pattern = north west lines] (2, 4) rectangle +(2, 2);
\draw[pattern = north west lines] (8, 4) rectangle +(8, 2);
\draw [thin, decorate,decoration={brace, mirror}, xshift = -4] (0, 4) -- +(0, 2) node [midway,xshift=-25, scale = 2] {$i$};
\end{tikzpicture}
\hspace{1cm}
\caption{Illustration of a matrix with $\mathcal{H}^2$-structure.}
\label{Hpic}
\end{figure}
The construction of cluster trees, block cluster nested trees, could be done using the standard procedure, see, for example, \cite{Sam-tree-1984}.  The crucial task here is to compute the cluster basis. In our numerical experiments we have used 
the method proposed in \cite{mikhos-mcbh-2013}, however other techniques maybe used as well.
Summarizing, an $\mathcal{H}^2$-matrix $A$ is defined by cluster trees $\mathcal{T}_c$ and $\mathcal{T}_r$, and the following sets of matrices $R = (R_i),~ i \in \mathcal{T}_r,$
$C = (C_p),~ p = (i,j) \in \mathbb{P}_{close},~ i \in \mathcal{T}_r,~ j \in \mathcal{T}_c,$
$S = (S_p),~ p = (i,j) \in \mathbb{P}_{far}, i \in \mathcal{T}_r,~j\in \mathcal{T}_c, $
$L = (L_j), ~j \in \mathcal{T}_c,$
\subsection{Matrix-vector multiplication}
Matrix-by-vector multiplication algorithm for the $\mathcal{H}^2$-matrix  \cite[p.~59-63]{Borm-h2-2010} plays the key role in this paper. Its formal description  is given in Algorithm \ref{mv}. 
 It is convenient to distribute the vector $x$ over the nodes of the row cluster tree:  $x = (x_i) ~i \in \mathrm{leaves}(\mathcal{T}_r).$ The resulting vector $y = Ax$ is also  computed  in the form $y = (y_j) ~j \in \mathrm{leaves}(\mathcal{T}_c).$ Note that 
 in the algorithm we use there is an additional operation that transfers $x$ to the leaves of the tree $\mathcal{T}_r$ with the 
 help of the matrices $D = (D_i), ~i \in leaves(\mathcal{T}_r),$ and $E = (E_j),~ j \in leaves(\mathcal{T}_c).$ 
\begin{algorithm}[h]
\SetKwFunction{FT}{ForwardTransformation}
\SetKwProg{myproc}{Procedure}{}{}
\SetKw{ret}{return:}
\caption{Forward transformation}
\myproc{\FT{$i, R, \hat{x}$}}{
\If{ $ \mathrm{sons}(i)$ $\neq$ 0 }{
	$\hat{x}_{father(i)} := R_i\hat{x}_i$
}
\Else{
	$\hat{x}_i := 0$\\
	\For{$j \in \mathrm{sons}(i)$}{
		$\hat{x_j}$ := \FT{$j, R, \hat{x}$}\\
		$\hat{x_i} := \hat{x_i} + R_jx_j$
	}
}
\ret{$\hat{x}$}
}
\end{algorithm} 

\begin{algorithm}[h]

\SetKwFunction{BT}{BackwardTransformation}\SetKwFunction{ret}{return:}
\SetKwProg{myproc}{Procedure}{}{}
\caption{Backward transformation}
\myproc{\BT{$i, L, \hat{y}$}}{
\If{ $ \mathrm{sons}(i)$ $\neq$ 0 }{
	$\hat{y}_{father(i)} := L_i\hat{y}_i$
}
\Else{
	\For{$j \in \mathrm{sons}(i)$}{
		$\hat{y_j} := \hat{y_j} + Ly_i$\\
		$\hat{y}$ := \BT{$j, L, \hat{y}$}
		
	}
}
\ret{$\hat{y}$}
}
\end{algorithm} 
\begin{algorithm}[H]
\label{mv}
\SetKwData{Left}{left}\SetKwData{This}{this}\SetKwData{Up}{up}
\SetKwFunction{proc}{proc}\SetKwFunction{FindCompress}{FindCompress}
\SetKwInOut{Input}{input}\SetKwInOut{Output}{output}
\caption{Matrix-vector multiplication.}

\Input {$\mathcal{H}^2$-matrix A = \{$\mathcal{T}_r$, $\mathcal{T}_c$, D, R, C, S, L, E\}, vector $x$}

\Output{vector y}

\For{$i \in\mathcal{T}_r$}{
	$\hat{x}_i := 0$
}
\For{$j \in\mathcal{T}_c$}{
	$\hat{y}_j := 0$
}

Step 1:\\
\For{$i \in\mathcal{T}_r$, $\mathrm{sons}$(i) = 0  }{
 \tcc{For all leaf nodes}
	$\hat{x}_i$  := $D_ix_i$
}
Step 2:\\
$\hat{x} := \texttt{ForwardTransformation}(root(\mathcal{T}_r), R, \hat{x})$\\
Step 3:\\
\For{$i \in\mathcal{T}_r$  }{
	\For{$j \in\mathcal{T}_c$  }{
		\If{ $(i,j) \in \mathcal{P}_{far}$ }{
			$\hat{y}_j := S_{(ij)}\hat{x}_i$
		}
		
	}
}
Step 4:\\
$\hat{y} := \texttt{BackwardTransformation}(root(\mathcal{T}_c), L, \hat{y})$\\
Step 5:\\
\For{$i \in\mathcal{T}_c$, $\mathrm{sons}(i) = 0 $ }{
	$ y_i := E_i\hat{y}_i$  
}
\SetAlgoLined
\end{algorithm}
On Figure \ref{sm} we give a graphical illustration of the matrix-vector product procedure.
\begin{figure}[h]
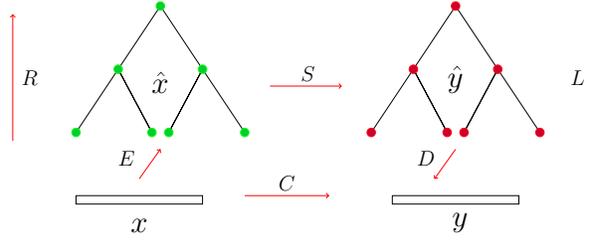

\hspace{3cm}
\resizebox{.6\textwidth}{!}{
\tikz{ \draw(0,-5.5)--(1,-4)--(1.8,-5.5)--(1,-4)--(2,-2.5)--(3,-4)--(2.2,-5.5)--(3,-4)--(4,-5.5);
    \path[fill=green!80!blue,draw=green] (0,-5.5) circle (1mm);
    \path[fill=green!80!blue,draw=green] (1,-4) circle (1mm);
    \path[fill=green!80!blue,draw=green] (1.8,-5.5) circle (1mm);
    \path[fill=green!80!blue,draw=green] (2,-2.5) circle (1mm);
    \path[fill=green!80!blue,draw=green] (3,-4) circle (1mm);
    \path[fill=green!80!blue,draw=green] (2.2,-5.5) circle (1mm);
    \path[fill=green!80!blue,draw=green] (4,-5.5) circle (1mm);
    \draw(7,-5.5)--(8,-4)--(8.8,-5.5)--(8,-4)--(9,-2.5)--(10,-4)--(9.2,-5.5)--(10,-4)--(11,-5.5);
    \path[fill=red!80!blue,draw=red] (7,-5.5) circle (1mm);
    \path[fill=red!80!blue,draw=red] (8,-4) circle (1mm);
    \path[fill=red!80!blue,draw=red] (8.8,-5.5) circle (1mm);
    \path[fill=red!80!blue,draw=red] (9,-2.5) circle (1mm);
    \path[fill=red!80!blue,draw=red] (10,-4) circle (1mm);
    \path[fill=red!80!blue,draw=red] (9.2,-5.5) circle (1mm);
    \path[fill=red!80!blue,draw=red] (11,-5.5) circle (1mm);
    \draw(0,-7)--(3,-7)--(3,-7.2)--(0,-7.2) -- (0,-7);
    \draw (1.5,-8) node[above]{\huge{$x$}} ;
    \draw [<-,  red] (2,-5.9) --(1.5,-6.6)   ;
    \draw (1.2,-6.4) node [above]  {\Large$E$} ;
    \draw (7.5,-7)--(10.5,-7)--(10.5,-7.2)--(7.5,-7.2) -- (7.5,-7);
    \draw (9,-8) node [above]  {\huge{ $y$}} ;
    \draw [<-,  red] (8.5,-6.6) -- (9,-5.9);
    \draw (8.3,-6.4) node [above]  {\Large{$D$}} ;
    \draw (11.9,-4.5) node [above]  {\Large{$L$}} ;
    \draw [<-,  red] (-1.5,-2.7) -- (-1.5,-5.7);
    \draw (5.5,-4.4) node [above]  {\Large{$S$}} ;
    \draw [<-,  red] (6.3,-4.4) -- (4.6,-4.4);
    \draw (-1.1,-4.5) node [above]  {\Large{$R$}} ;
   \draw [<-, red] (12.3,-5.7) -- (12.3,-2.7);
   \draw (5,-7) node [above]  {\Large{$C$}} ;
    \draw [<-,  red] (6,-7) -- (4,-7);
    \draw (2,-4.7) node [above]  {\huge{$\hat{x}$}} ;
    \draw (9,-4.7) node [above]  {\huge{$\hat{y}$}} ;
    }}
\caption{Illustration of the Algorithm of matrix-vector product.}
\label{sm}
\end{figure} 
The complexity is $\mathcal{O}(N)$ and storage complexity is also $\mathcal{O}(N)$.
\section{Sparse extended form: the main idea} 
Our main observation is that Algorithm \ref{mv} can be rewritten as a multiplication of
a sparse matrix by vector. The first step of Algorithm \ref{mv} can be rewritten as a matrix-vector product $Dx = \hat{x}_l$, where 
\begin{equation} \label{Dmat}
D = \begin{bmatrix}
D_1 & 0 & 0 &0 \\ 
0 & D_2 & 0 & 0\\ 
 0&  0&  \ddots &0 \\ 
 0&0  &  0& D_{N_{lr}}
\end{bmatrix}, 
\end{equation}
where $N_{lr}$ is number of leaf nodes in tree $\mathcal{T}_r$, zeros represent zero matrices of appropriate sizes.
We can rewrite  the Step 2 as $R\hat{x} = \hat{x}_{n}$, where
\begin{equation} \label{Rmat}
R = \begin{bmatrix}
R_1 &  0&0  &0 \\ 
0 & R_2 & 0 & 0\\ 
 0&0  &  \ddots & 0\\ 
 0& 0 &0  & R_{N_{nr}}
\end{bmatrix}, 
\end{equation}
where $N_{nr}$ is the number of non-leaf nodes in the tree $\mathcal{T}_r$ zeros are zero matrices of appropriate sizes. 
The third step of Algorithm \ref{mv} corresponds to equation $ L\hat{y}_{n}+S\hat{x} =  \hat{y} $, where 
\begin{equation} \label{LCmat}
L = \begin{bmatrix}
L_1 &0  & 0 &0 \\ 
 0& L_2 & 0 & 0\\ 
 0&0  &  \ddots &0 \\ 
 0&0  &0  & L_{N_{lc}}
\end{bmatrix} , S = \begin{bmatrix}
S_{11} &S_{12}   & \cdots &S_{1N_c}  \\ 
 S_{21} & S_{22} & \cdots & S_{2N_c}\\ 
 \vdots& \vdots &  \ddots & \\ 
 S_{N_c1} &  S_{N_c2}&  & S_{N_cN_c}
\end{bmatrix} ,
\end{equation}
where $N_c$ is the number of nodes in tree $\mathcal{T}_c$,  $N_{lc}$ is the number of leaf nodes in tree $\mathcal{T}_c$,  $S_{ij} = 0$ if $i\in\mathcal{T}_c$  $j \in \mathcal{T}_r$ $(i,j) \in  \mathcal{P}_{far}$. And the final step we rewrite as  $y = Ey_l + Cx$, where 
\begin{equation} \label{Emat}
E = \begin{bmatrix}
E_1 & 0 & 0 & 0\\ 
 0& E_2 & 0 & 0\\ 
 0& 0 &  \ddots &0 \\ 
 0&0  &0  & E_{N_{lc}}
\end{bmatrix},
\end{equation}
Putting this all together we get
\begin{equation} \label{system}
\left\{\begin{matrix}
 Dx = \hat{x}_l \\
 R\hat{x} = \hat{x}_{n} \\
 L\hat{y}_{n}+S\hat{x} =  \hat{y}  \\
 E\hat{y}_{l}+Cx  = y
 \end{matrix}
\right.,
\end{equation}
or in the block form:
\begin{equation} \label{l1}
\begin{bmatrix}
C & 0 & 0 &E \\ 
 0& S &L  &0 \\ 
 0&  R& 0 & 0\\ 
 D& 0 & 0 & 0
\end{bmatrix} 
\begin{bmatrix}
     x\\ \hat{x}\\ \hat{y}_{n}\\  \hat{y}_{l} 
\end{bmatrix} = 
\begin{bmatrix}
     y\\ \hat{y}\\ \hat{x}_{n}\\  \hat{x}_{l} 
\end{bmatrix},\end{equation}
Denote the obtained matrix by $H_0$, and also introduce a block vector $\hat{x} = \begin{bmatrix}
      \hat{x}_n \\ 
      \hat{x}_l 
  \end{bmatrix}$ and $\hat{y} = \begin{bmatrix}
      \hat{y}_n \\ 
      \hat{y}_l 
  \end{bmatrix},$.
  Finally, we get
\begin{equation}\label{l2}
  H_0 \begin{bmatrix}
      x  \\ 
      \hat{x}\\
      \hat{y}  
  \end{bmatrix} = \begin{bmatrix}
      y  \\ 
      \hat{y}\\
      \hat{x}  
  \end{bmatrix}.
\end{equation}
Recall that our goal is given $y$ compute $x$, therefore
$$\left ( H_0  +
\begin{bmatrix}
      0^{N\times N} &0&0 \\ 
        0 &0&-I^{N_{\hat{y}}\times N_{\hat{y}}} \\ 
        0 &-I^{N_{\hat{x}}\times N_{\hat{x}}}&0 \\  
  \end{bmatrix}
  \right )  \begin{bmatrix}
      x  \\ 
      \hat{x}\\
      \hat{y}  
  \end{bmatrix} = \begin{bmatrix}
      y  \\ 
      0\\
      0  
  \end{bmatrix} ,
$$
where $N_{\hat{x}} = len(\hat{x})$, $N_{\hat{y}} = len(\hat{y})$ and 
 $$H = H_0  +\begin{bmatrix}
      0 &0&0 \\ 
        0 &0&-I \\ 
        0 &-I&0 \\  
  \end{bmatrix}
.$$
The final system of equations has the form
\begin{equation}\label{l3}
  H \begin{bmatrix}
      x  \\ 
      \hat{x}\\
      \hat{y}  
  \end{bmatrix} = \begin{bmatrix}
      y  \\ 
      0\\
       0 
  \end{bmatrix}.
\end{equation}
Now the right hand side of system \eqref{l3} contains only known values and 
we can solve it and find $x$, where $x$ is the solution of $Ax = y$.
We will call the matrix $H$ ``sparse extended form'', or SE-form, of the $\mathcal{H}^2$-matrix $A$. 
\section{Properties of the SE-form}
An important property of the SE-form is that if $A$ non-singular, 
the SE-form of $A$ is non-singular as well.
\begin{theorem}
If a matrix $A \in \mathbb{R}^{N \times N}$  is a nonsingular $\mathcal{H}^2$-matrix, then  $H = \SE(A) \in \mathbb{R}^{N_H \times N_H}$ is nonsingular, and   $N_H < (2k+1)N$, where $k$ is the maximum of numbers of levels of the cluster trees $\mathcal{T}_c$ 
and  $\mathcal{T}_r$.
\end{theorem}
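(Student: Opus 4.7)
The plan is to handle the two assertions separately: first the dimensional bound $N_H<(2k+1)N$, which is essentially a bookkeeping argument, and then the nonsingularity of $H$, which rests on the fact that the SE-form is simply the matrix-vector multiplication algorithm (Algorithm~\ref{mv}) rewritten as a linear system.

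For the size estimate I would split $N_H = N + |\hat x| + |\hat y|$ according to the block partition in~\eqref{l3} and bound $|\hat x|$ and $|\hat y|$ level by level in the cluster trees. The block $\hat x$ contains one subvector $\hat x_i$ per node $i\in\mathcal{T}_r$, of dimension equal to the cluster-basis rank $K_i$ at~$i$. At any fixed level of $\mathcal{T}_r$ the row groups form a disjoint partition of the index set $I$ and each cluster-basis matrix has at most as many columns as rows, so the sum of the ranks at that level is at most~$N$. Summing over the $\leq k$ levels yields $|\hat x|\leq kN$, and the same argument applied to $\mathcal{T}_c$ gives $|\hat y|\leq kN$. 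Hence $N_H\leq(2k+1)N$; strict inequality follows because at least one of the trees has a root with no associated transfer matrix (or a level at which the compressed ranks are strictly smaller than the local group sizes), so at least one level sum is strictly less than~$N$.

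For the nonsingularity, suppose $Hz=0$ with $z=(x,\hat x,\hat y)$. Unwrapping the block structure of~\eqref{l3} (equivalently~\eqref{l1}) recovers the four constraints of~\eqref{system} with zero right-hand side, namely
\[
Dx=\hat x_l,\qquad R\hat x=\hat x_n,\qquad S\hat x + L\hat y_n = \hat y,\qquad Cx + E\hat y_l = 0.
\]
The first pair of equations is exactly the leaf-level initialisation and the nested recursion of the forward transformation, so sweeping $\mathcal{T}_r$ from the leaves upward uniquely determines $\hat x$ as a linear function of~$x$; in particular $x=0\Rightarrow\hat x=0$. The third equation, read from the root of $\mathcal{T}_c$ downward, is the far-field assembly combined with the backward transformation, and uniquely determines $\hat y$ as a linear function of~$\hat x$; hence $\hat x=0\Rightarrow\hat y=0$. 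Substituting both expressions into the fourth equation reproduces exactly what Algorithm~\ref{mv} computes from the input~$x$, which by construction of the $\mathcal{H}^2$-representation~\eqref{defH} equals~$Ax$. So the fourth equation collapses to $Ax=0$, nonsingularity of~$A$ forces $x=0$, and propagating through the two uniqueness chains above gives $\hat x=0$ and $\hat y=0$, hence $z=0$.

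The step that I expect to require the most care is the identification that eliminating $\hat x,\hat y$ from the SE-form returns exactly~$A$, i.e.\ that Algorithm~\ref{mv} applied to~$x$ evaluates to~$Ax$. Informally this is the content of the paragraphs leading to~\eqref{l3}, but a fully formal verification needs an induction on the tree level, using the nestedness identity~\eqref{nb} to show that the telescoping of $R$, $S$, and $L$ reproduces the far-field sum $\sum_{(i,j)\in\mathcal{P}_{far}} R_i S_p L_j^*$ in~\eqref{defH}, after which the close-field term $C$ accounts for the remaining summand. Once that identification is in hand, both uniqueness inductions and the final conclusion follow cleanly.
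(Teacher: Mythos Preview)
Your proposal is correct and follows essentially the same approach as the paper: bound $|\hat x|$ and $|\hat y|$ by $kN$ via a level count, then for nonsingularity argue that $Hz=0$ forces $Ax=0$ (because the SE-form encodes Algorithm~\ref{mv}), whence $x=0$ and consequently $\hat x=0$, $\hat y=0$. Your version is in fact more carefully written than the paper's, which asserts the size bound and the identification $Ax=0$ without spelling out the level-by-level argument or the elimination of $\hat x,\hat y$; your inductive justification of the latter via~\eqref{nb} is exactly the missing detail.
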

\begin{proof}
First let us prove that the matrix $H$ is square. The matrix $A$ is square therefore  $\len(x)=\len(y)= N$. 
Let $N_r$ be the number of rows in $H$ and $N_c$ be the number of columns in it. 
Then, 
$$N_r = \len(x)+\len(\hat{x})+\len(\hat{y}) = \len(y) + \len(\hat{y}) + \len(\hat{x})= N_c  $$
Thus $H$ is a square  matrix. Note $\len(\hat{x}) \leq k_1N$, $\len(\hat{y}) \leq k_2N$, where $k_1$ and $k_2$ are the 
numbers of levels  of the trees  $\mathcal{T}_c$ and  $\mathcal{T}_r$. Therefore
$$N_H =  \len(x)+\len(\hat{x})+\len(\hat{y}) = N + k_1N + k_2N < (2k+1)N$$
Now we prove that $H$ is nonsingular. 
Suppose that $Hz = 0$ and let us prove that it implies $z = 0$.
Due to the construction of the SE-form, the first block component of the vector $z$ satisfies 
$Ax = 0$, thus $x = 0$. 
According to Algorithm \ref{mv} and  \eqref{system} $x = 0$ leads to  $\hat{x} = 0$ and  $\hat{y} = 0$
therefore $z = 0$ and the kernel of $H$ is trivial.
\end{proof}
Note, that the condition number of the SE-form is typically much larger than of the original matrix, so 
special preconditioning is need
\section{Solvers based on SE-form}
\label{TP}
How we can use the SE-form of the matrix for the solution of linear systems. We propose several methods, which are listed
below.
\begin{enumerate}[label=\bfseries Method \arabic*:,leftmargin=*]
    \item (Direct solver)
Apply sparse direct solvers to $\SE(A)$ and given $y$ compute $x$.
 
  \item  (Preconditioning \eqref{l3} with matrix $\SE(A)$)
Construct preconditioner to $\SE(A)$ based on the block structure.
  \item  (Iterative solvers for systems with $A$ using $\SE(A)$ as preconditioner)
SE-form can be used as preconditioner for the original system. To solve the correction equation we apply several steps of some solver for $\SE(A)$.
\end{enumerate}
Now we describe them in more details. 
\subsection{Method 1}
Applying any fast sparse direct solver to $\SE(A)$ is natural idea. However, it is ok for small $N$, but the amount of memory required for the such solver grows very quickly. The advantage of this method is that it is very simple to implement. Once a sparse 
representation of the $\SE(A)$ is given, we only need to call the procedure. Another approach is to compute some preconditioner
for $\SE(A)$. In our numerical experiments we have tested ILUT preconditioner. We will call this approach SE-ILUT preconditioner.
\subsection{Method 2}
\label{M2}
Now we consider system \eqref{l3} with sparse extended matrix $\SE(A)$ and find solution of this system iteratively.  In numerical experiments we found that $\SE(A)$ always have very large condition numbers. Thus, a preconditioner is needed.
A natural way is to use the block structure of $\SE(A)$ to construct the preconditioner. 
We propose a \textbf{SE-Block preconditioner}. We compute an approximate inverse of the ``far block'' of $\SE(A)$ and all others
are replaced by identity matrices:
\begin{equation} \label{Bp}
B = \begin{bmatrix}
I & 0 & 0  \\ 
0 & P(S) & 0  \\ 
 0 & 0 &  I 
\end{bmatrix}, 
\end{equation}
 where $P(S)$ is some preconditioner for the block $S$. Note, that block $S$ can be rectangular, in 
this case  we construct preconditioner for the smallest  square block that contains $S$.
In experiments we have seen that the ill-conditioning of $S$ is the reason why $H$ is ill-conditioned.
\subsection{Method 3}
\label{subsec:hp}
The matrix $A$ can be considered as a Schur complement of $H = \SE(A)$ with components related to $\widehat{x}$ and
$\widehat{y}$ eliminated. Typically, Schur complement is used as a preconditioner; 
here we use \emph{reverse Schur component preconditioning} (similar ideas were used in \cite{white-schur-2009}).
In this method, Schur complement is used in the opposite fashion: we solve the system with a matrix $A$ 
and to solve the correction equation we go to the large system (extend the residual), apply several 
steps of some preconditioned iterative solver for $\SE(A)$ and extract the required vector as a corresponding component
of the result. This approach appears to be the most effective one for the problems we have considered. 
Moreover, additional speedup can be obtained by using \emph{recompression} of the $\mathcal{H}^2$-matrix, i.e.
$$A \approx B$$
where $B$ has smaller ranks in the rank distribution. The description of the robust algorithm based
on SVD can be found in \cite{Borm-h2-2010}, and it is implemented in the h2tools package as well. 
Then we use $\SE(B)$ instead of $\SE(A)$ in the algorithm, so this method is not a ``reverse Schur complement'', 
but ``approximate reverse Schur complement'' method. We will denote this approach \emph{SVD-SE}.
The final algorithm is summarized in Algorithm~\ref{svd-se-schur}.

\begin{algorithm}[H]
\DontPrintSemicolon
\SetKwComment{tcp}{$\triangleright$~}{}
\KwData{Matrix $A$ in the $\mathcal{H}^2$-format, right-hand side $y$, required tolerance 
$\varepsilon$, inner parameters: $\delta_{\mathrm{ILUT}}$ and $\delta_{\mathrm{SVD}}$, number of reverse Schur iterations $k_{\mathrm{schur}}$ }
\KwResult{Approximate solution $x$ such that $\Vert A x - y \Vert \leq \varepsilon$}
\Begin{
    \nl Compute $B$ as a recompressed $\mathcal{H}^2$ representation of $A$ with accuracy $\delta_{\mathrm{SVD}}$ 

    \nl Compute $\SE(B)$ and ILUT preconditioner $P$ with drop tolerance $\delta_{\mathrm{ILUT}}$

    \nl $x = \mathrm{GMRES}(A, y, \mathrm{tol}=\varepsilon, \mathrm{prec}=\mathrm{RevSchur})$
    %
    
    \setcounter{AlgoLine}{0}
    \SetKwFunction{RevSchur}{RevSchur}
    \SetKwProg{myproc}{Procedure}{}{}
    \myproc{\RevSchur{$r$}}{
        \nl Extend right-hand side: 
    $\widehat{r} = \begin{pmatrix} r^{\top} & 0 & 0 \end{pmatrix}^{\top}$

              \nl Do $k_{\mathrm{schur}}$ steps of ILUT-preconditioned GMRES for $\SE(B)$:  
        $
        \widehat{z} = 
         \mathrm{GMRES}(\SE(B), \widehat{r}, \mathrm{maxit}=k_{\mathrm{schur}}, \mathrm{prec}=P) 
        $

        \nl Extract $z$ as the first $N$ components of $z$

    
    \nl \KwRet\ $z$}
    }
\caption{SVD-SE method}\label{svd-se-schur}
\end{algorithm}


\section{Numerical experiments}
\subsection{Electrostatic problem}
As a model problem we consider boundary integral equation of the form
 \begin{equation} \label{ie1}
     \int\limits_{\Omega}\frac{q(y)}{\left 
\| x-y \right \|}dy=f(x), \quad x \in \Omega,
\end{equation} where $\Omega$ is  $[0, 1]^2$. The function $f(x)$ is given and $q(y)$ is to be found. 
Equation \eqref{ie1} is discretized using collocation method  with piece-wise constant basis functions 
on the triangular mesh $\Omega_N$ with $N$ triangles (see Figure~\ref{sutf1}) 
The matrix elements can be evaluated analytically \cite{Hess-quad-1962}.
The matrix $A$ is then approximated in the $\mathcal{H}^2$-format using the h2tools package \cite{h2tools}.  
The $\SE$-form of $A$ is presented on Figure~\ref{SEA} for $N = 1196$. 
\begin{figure}[H]
\begin{minipage}[H]{0.55\linewidth}
\center{\includegraphics[scale = 0.45]{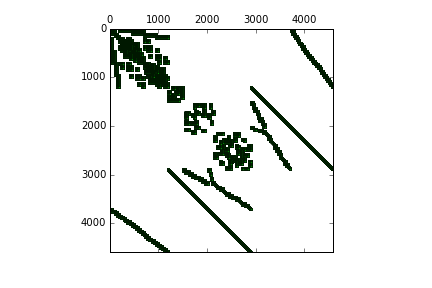}
\caption{SE-form of matrix $A$}
\label{SEA}}
\end{minipage}
\hspace{0.5cm}
\begin{minipage}[H]{0.34\linewidth}
\center{\includegraphics[scale = 0.15]{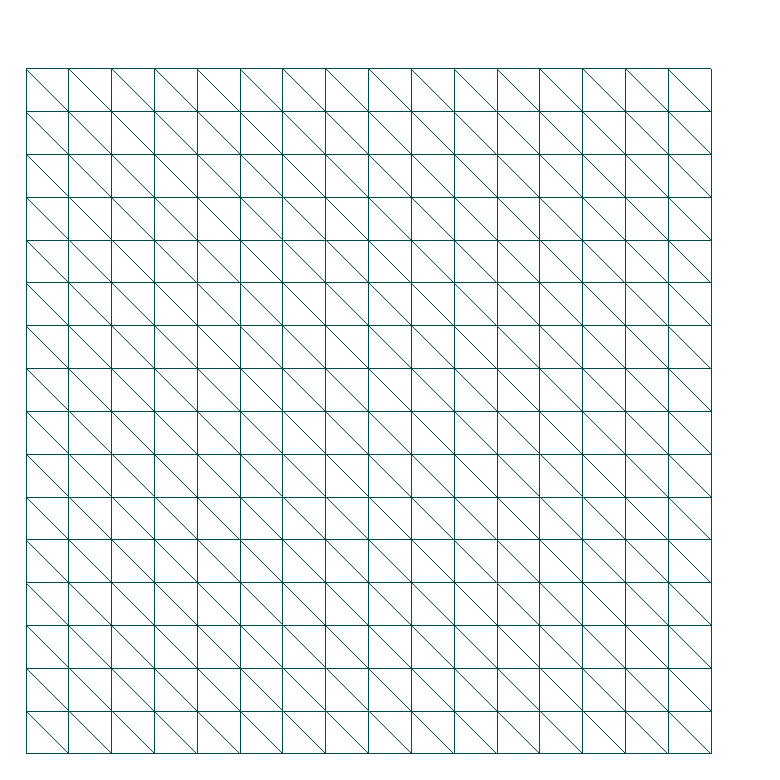}
\vspace{0.4cm}
\caption{Triangular mesh $\Omega_N$}
\label{sutf1}}
\end{minipage}
\end{figure}
\subsubsection{Method 1}
In Table \ref{tab:dir} the results of Method 1 (sparse direct solver applied to $\SE(A)$) are presented. 
As it is readily seen, the memory quickly becomes a bottleneck.
\begin{figure}[H]
\begin{center}
  \begin{tabular}{ | l | c  c | }
    \hline
N & time, (s) & Mem, (MB)\\ \hline
    3928	& 2.585 & 376.85	  \\ 
	13640	& 37.76 & 2527.7	 \\ 
    28080 	& 234  	& 5012.1	\\ 
    59428 	& ---    	& ---	\\ 
    98936 	& ---     &  ---	 \\ 
    \hline
  \end{tabular}
\end{center}
\caption{Timings and memory for Method 1}
\label{tab:dir}
\end{figure}

\subsubsection{Method 2}
The convergence of GMRES method with SE-ILUt and block preconditioners is presented on Figure~\ref{SE}. 
\begin{figure}[H]
\begin{center}
\resizebox{0.9\linewidth}{!}{
\scalebox{0.3}{\input{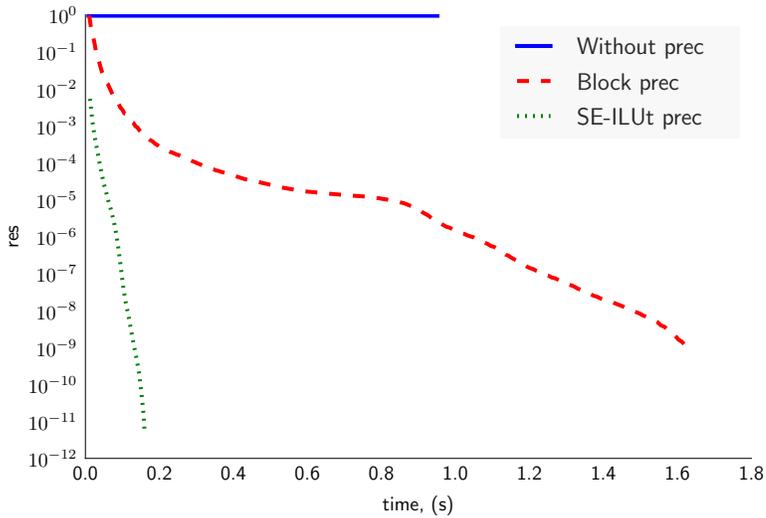}}}
\caption{Method 2: GMRES convergence with different preconditioners}
\label{SE}
\end{center}
\end{figure}
The number of iterations with the SE-ILUT preconditioner is significantly smaller than the 
number of iterations with the block preconditioner, however, the computation of the block preconditioner is much less 
expensive. This is illustrated in Table~\ref{tab:SE}.   
\begin{figure}[H]
\begin{center}
  \begin{tabular}{ | l |  c  c |   }
    \hline
N	& Block prec + GMRES, (s) 	& SE-ILUt prec + GMRES, (s)	\\ \hline
    3928		& 0.085 + 1.28 	& 1.75 + 0.17	\\  
	13640	  	& 0.23 + 5.6 	& 9.17 + 0.52	\\ 
    28080		& 0.53 + 11.8 	& 27.17 + 0.91	\\ 
    59428  		& 1.34 + 34.8 	& 75.02+ 3.13	\\ 
    98936		& 3.28 + 59.13 & 134.11+ 10.2	\\ 
    \hline
  \end{tabular}
\end{center}
\caption{Timing for building the preconditioner and solving the system using the GMRES method}
\label{tab:SE}
\end{figure}
\subsubsection{Method 3} 
The convergence of the GMRES method with reverse-Schur preconditioning is presented on Figure~\ref{H2} for $N = 28080$.
\begin{figure}[H]
\begin{center}
\resizebox{0.9\linewidth}{!}{
\scalebox{0.3}{\input{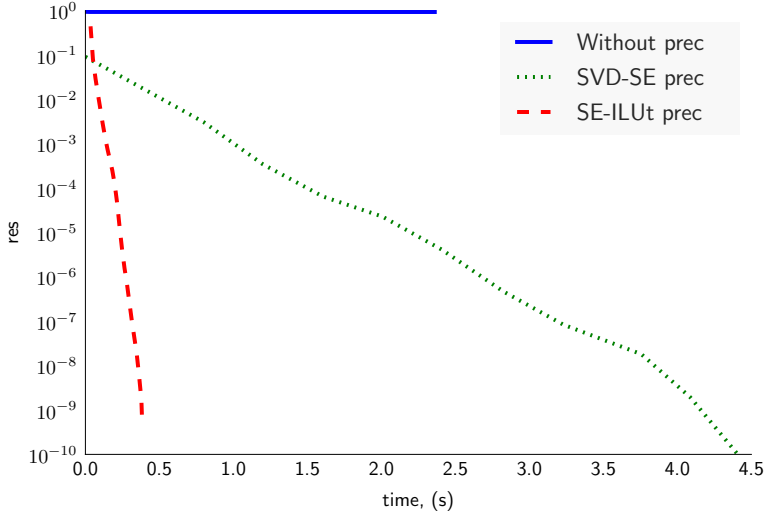}}}
\caption{Method 3: Convergence of GMRES for $N = 28080$}
\label{H2}
\end{center}
\end{figure}
The total computational cost for different reverse-Schur preconditioner is given in Table~\ref{tab:H2}. Note that
the SE-ILUt preconditioner in the second column is the same as in the previous section, however it is more effective to use
it as a reverse-Schur preconditioner for the original system, than for the full $\SE$-form.
The compression of the $\mathcal{H}^2$-form of the matrix yields the best preconditioner by a factor of $4$. 
\begin{table}[H]
\begin{center}
  \begin{tabular}{ | l | c   c |  }
    \hline
N 	& SE-ILUt prec + GMRES&  SVD-SE prec + GMRES, (s)	\\ \hline
    3928		& 2.13 + 0.03	&  0.21 + 0.11 	\\ 
	13640		& 8.84 + 0.11	&  1.34 + 1.32		\\ 
    28080	 	& 24.8 + 0.35	&  8.35 + 2.94		\\ 
    59428 		& 69.1 + 1.33	&  19.7 + 6.13		\\
    98936  		& 150.2 + 4.38  &  40.01 + 15.03			\\ 
    \hline
  \end{tabular}
\end{center}
\caption{Timings for the Method 3 with different reverse-Schur preconditioners}
\label{tab:H2}
\end{table}
\subsubsection{Final comparison}
In Table~\ref{tab:ft} we present final comparison for all methods for different $N$. 
For Method 2 and Method 3 the solution time is the total computational time (building the preconditioner and solving the system 
using GMRES) 
\begin{table}[H]
\begin{center}
  \begin{tabular}{ | l | c  c  c  c  c |  }
    \hline
& Method 1 & \multicolumn{2}{ c }{Method 2}& \multicolumn{2}{ c| }{Method 3}  \\ \cline{2-6}
	N & Dir. sol, (s)&  Block, (s) & SE-ILUt, (s) 	& SE-ILUt, (s) & SVD-SE, (s)  \\ \hline
    3928	& 2.585 	& 3.215 	& 2.11 	& 2.16		& 0.87 	\\ 
    13640	& 37.76  	& 10.83 	& 9.69 	& 8.95		& 4.65	\\  
    28080 	& 234  		& 22.33 	& 28.08	& 25.47 	& 16.92	\\  
    59428 	& ---   		& 53.14 	& 78.15	& 70.37 	& 42.01	\\  
    98936 	& ---      	& 122,41 	& 144.31	& 127.95 	& 89.94	\\  
    \hline
  \end{tabular}
\end{center}
\caption{Timings for all methods for different $N$.}
\label{tab:ft}
\end{table}
The memory is an important constraint, and in Table~\ref{tab:fm} we present the memory required for each of the methods for different 
$N$. 
\begin{figure}[H]
\begin{center}
\resizebox{0.95\linewidth}{!}{
  \begin{tabular}{ | l | c  r  c  c  c |  }
    \hline
& Method 1 & \multicolumn{2}{ c }{Method 2}& \multicolumn{2}{ c| }{Method 3}  \\ \cline{2-6}
	N & Dir. sol,\,(MB)&Block,\,(MB)  	& SE-ILUt,\,(MB) 	& SE-ILUt,\,(MB) & SVD-SE,\,(MB)  \\ \hline
    3928	& 424		& 24.2 		& 31.4 		& 15.7		& $<$ 10	\\  
    13640	& 2606  	& 251.2 	& 279.8 	& 31.4		& $<$	10	\\  
    28080 	& 14702		& 675.3 	& 2464.9	& 219.8		& 47.1	\\  
    59428 	& ---   		& 1301		& 9720.6	& 810 		& 128.3	\\  
    98936 	& ---      	& 6340.1 	& 	---	  	& 2028.8 	& 221.5	\\  
    \hline
  \end{tabular}}
\end{center}
\caption{Memory requirements for all the methods for different $N$, missing entries mean ``out of memory''}
\label{tab:fm}
\end{figure}
Method 3 with SVD recomporession is the fastest method and requires much less memory.
\subsection{Hypersingular integral equation}
The second problem is the hypersingular integral equation 
\begin{equation} \label{ie2}
\int\limits_{\Omega}\frac{q(y)}{\left 
\| x-y \right \|^3}\, dy=f(x),
\end{equation} 
where $\Omega$ is shown on Figure~\ref{sutf2}.
The equation is discretized using the collocation method with piecewise-constant basis functions (also known as 
discrete vortices method), and the approximation in the $\mathcal{H}^2$-format is done using the h2tools package as well.
The $\SE$-form of $A$ is given on Figure~\ref{SE_a}. The mesh has $N = 28340$ triangles.
\begin{figure}[H]
\begin{minipage}[h]{0.46\linewidth}
\vspace{2cm}
\center{\includegraphics[scale = 0.39]{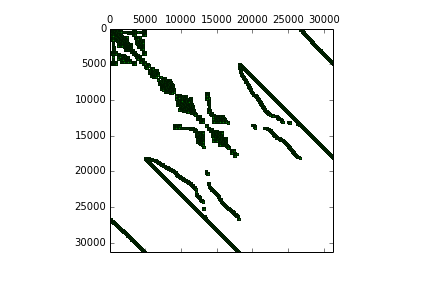}
\vspace{0.4cm}
\caption{SE-form of matrix $A$}
\label{SE_a}}
\end{minipage}
\begin{minipage}[H]{0.49\linewidth}
\center{\includegraphics[scale = 0.23]{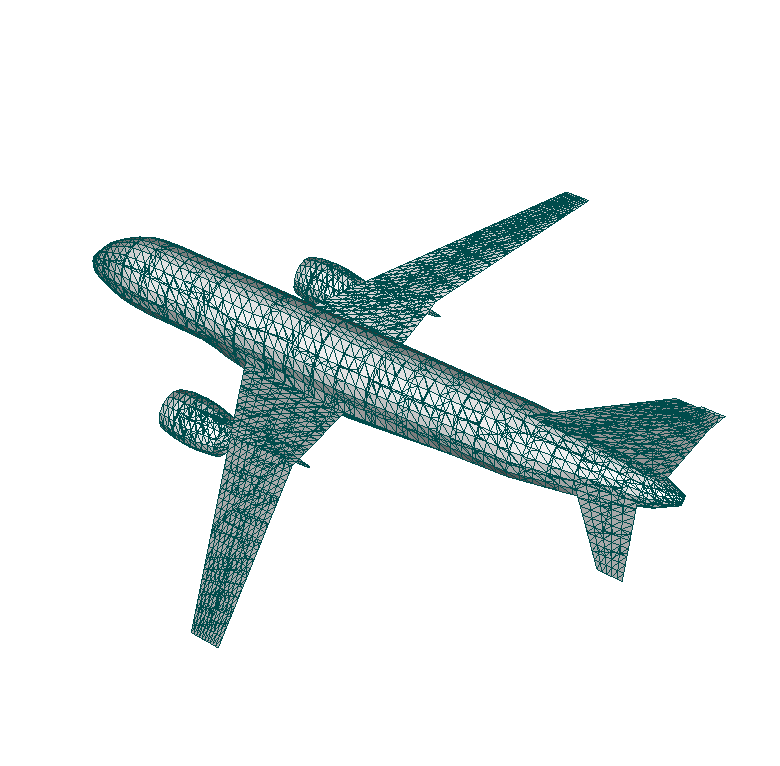}
\caption{Surface $\Omega_N$}
\label{sutf2}}
\end{minipage}
\hspace{0.5cm}
\end{figure}
The comparison of different methods is shown in Table~\ref{tab:air}.
\begin{figure}[H]
\begin{center}
  \begin{tabular}{ | l | c  c  c  c  c |  }
    \hline
& Method 1 & \multicolumn{2}{ c }{Method 2}& \multicolumn{2}{ c| }{Method 3}  \\ \cline{2-6}
	 & Dir. sol &  Block  	& SE-ILUt 	& SE-ILUt & SVD-SE  \\ \hline
    memory, (M)	& -	& 28.3	& 31.7 	& 31.7	& 17.3	\\  
    time, (s)	& -  & -	& - 	& 8.28	& 13.4	\\     
    \hline
  \end{tabular}
\end{center}
\caption{Timing and memory for all methods for problem \eqref{ie2}}
\label{tab:air}
\end{figure}

\section{Conclusions}
The new SE-form of the $\mathcal{H}^2$-matrix allows for different ways for the constuction of effective linear 
systems solvers with such matrices. Numerical experiments show that the most effective way is to use
the reverse-Schur preconditioner with SVD-recompression of the $\mathcal{H}^2$-form of $A$.
The implementation of the methods is available as a part of the open-source package h2tools \cite{h2tools}, and the numerical experiments 
are available as IPython notebooks.
\bibliography{lib}

\begin{thebibliography}{10}

\bibitem{Amb-dir_h2-2014}
{\sc S.~Ambikasaran and E.~Darve}, {\em The inverse fast multipole method},
  {arXiv} preprint, 2014.

\bibitem{white-schur-2009}
{\sc J.~Bardhan, M.~Altman, B.~Tidor, and J.~White}, {\em {``Reverse-Schur''}
  approach to optimization with linear {PDE} constraints: Application to
  biomolecule analysis and design}, J. Chem. Theory Comput., 5 (2009),
  pp.~3260--3278.

\bibitem{Beb-hlu-2005}
{\sc M.~Bebendorf}, {\em Hierarchical {LU} decomposition-based preconditioners
  for {BEM}}, Computing, 74 (2005), pp.~225--247.

\bibitem{Borm-h2-2010}
{\sc S.~B{\"o}rm}, {\em Efficient numerical methods for non-local operators:
  H2-matrix compression, algorithms and analysis}, vol.~14, European
  Mathematical Society, 2010.

\bibitem{Borm-h-2003}
{\sc S.~Borm, L.~Grasedyck, and W.~Hackbusch}, {\em {Introduction to
  hierarchical matrices with applications}}, Eng. Anal. Bound Elem., 27 (2003),
  pp.~405--422.

\bibitem{ChanDew-hss_se-2006}
{\sc S.~Chandrasekaran, P.~Dewilde, M.~Gu, W.~Lyons, and T.~Pals}, {\em A fast
  solver for {HSS} representations via sparse matrices}, SIAM J. Matrix Anal.
  Appl., 29 (2006), pp.~67--81.

\bibitem{ChLi-hss-2007}
{\sc S.~Chandrasekaran, M.~Gu, X.~Li, and J.~Xia}, {\em Some fast algorithms
  for hierarchically semiseparable matrices}, Private Communication,  (2007).

\bibitem{ChLyons-hss-2005}
{\sc S.~Chandrasekaran, M.~Gu, and W.~Lyons}, {\em A fast adaptive solver for
  hierarchically semiseparable representations}, Calcolo, 42 (2005),
  pp.~171--185.

\bibitem{ChMing-dir_hss-2006}
{\sc S.~Chandrasekaran, M.~Gu, and T.~Pals}, {\em A fast ulv decomposition
  solver for hierarchically semiseparable representations}, SIAM J. Matrix
  Anal. A., 28 (2006), pp.~603--622.

\bibitem{CorMar-dir_hss-2014}
{\sc E.~Corona, P.-G. Martinsson, and D.~Zorin}, {\em An {$O(N)$} direct solver
  for integral equations on the plane}, Appl. Comput. Harmon. A.,  (2014).

\bibitem{GrRo-fmm-1987}
{\sc L.~Greengard and V.~Rokhlin}, {\em A fast algorithm for particle
  simulations}, J. Comput. Phys., 73 (1987), pp.~325--348.

\bibitem{GrRo-fmm-1988}
{\sc L.~Grengard and V.~Rokhlin}, {\em The rapid evaluation of potential fields
  in three dimensions}, Springer, 1988.

\bibitem{HackBorm-h2-2002}
{\sc W.~Hackbusch and S.~Borm}, {\em $\mathcal{H}^2$-matrix approximation of
  integral operators by interpolation}, Appl. Numer. Math., 43 (2002),
  pp.~129--143.

\bibitem{Hess-quad-1962}
{\sc J.~L. Hess and A.~Smith}, {\em Calculation of non-lifting potential flow
  about arbitrary three-dimensional bodies}, tech. rep., DTIC Document, 1962.

\bibitem{Ho-dir_hss-2012}
{\sc K.~Ho and L.~Greengard}, {\em A fast direct solver for structured linear
  systems by recursive skeletonization}, SIAM J. Sci. Comput., 34 (2012),
  pp.~A2507--A2532.

\bibitem{Mar-hss-2011}
{\sc P.~G. Martinsson}, {\em A fast randomized algorithm for computing a
  hierarchically semiseparable representation of a matrix}, SIAM J. Matrix
  Anal. A., 32 (2011), pp.~1251--1274.

\bibitem{h2tools}
{\sc A.~Mikhalev}.
\newblock \url{https://bitbucket.org/muxas/h2tools}, 2013.

\bibitem{mikhos-mcbh-2013}
{\sc A.~Mikhalev and I.~V. Oseledets}, {\em Adaptive nested cross approximation
  of non-local operators}, {arXiv} preprint 1407.1572, 2013.

\bibitem{Ro-fmm-1985}
{\sc V.~Rokhlin}, {\em Rapid solution of integral equations of classical
  potential theory}, J. Comput. Phys., 60 (1985), pp.~187--207.

\bibitem{Sam-tree-1984}
{\sc H.~Samet}, {\em The quadtree and related hierarchical data structures},
  ACM Comput. Surv., 16 (1984), pp.~187--260.

\bibitem{Sent-electr_ex-1992}
{\sc S.~D. Senturia, R.~M. Harris, B.~P. Johnson, S.~Kim, K.~Nabors, M.~A.
  Shulman, and J.~K. White}, {\em A computer-aided design system for
  microelectromechanical systems (memcad)}, Mater. Res. Soc. Symp. P., 1
  (1992), pp.~3--13.

\bibitem{ShDewilde-hss-2007}
{\sc Z.~Sheng, P.~Dewilde, and S.~Chandrasekaran}, {\em Algorithms to solve
  hierarchically semi-separable systems}, in System theory, the Schur algorithm
  and multidimensional analysis, Springer, 2007, pp.~255--294.

\bibitem{tee-mosaic-1996}
{\sc E.~E. Tyrtyshnikov}, {\em Mosaic-skeleton approximations}, Calcolo, 33
  (1996), pp.~47--57.

\bibitem{XiaSh-hss-2009}
{\sc J.~Xia, S.~Chandrasekaran, M.~Gu, and X.~S. Li}, {\em Superfast
  multifrontal method for large structured linear systems of equations}, SIAM
  J. Matrix Anal. A., 31 (2009), pp.~1382--1411.

\end{thebibliography}
\end{document}